\newcommand{\C}{\mathbb{C}}
\newcommand{\R}{\mathbb{R}}
\newcommand{\Q}{\mathbb{Q}}
\newcommand{\g}{\mathfrak{g}}
\newcommand{\h}{\mathfrak{h}}
\newcommand{\n}{\mathfrak{n}}
\newcommand{\f}{\mathfrak{f}}
\newcommand{\z}{\mathfrak{z}}
\newcommand{\Der}{\operatorname{Der}}
\newtheorem{theorem}{Theorem}[section]
\newtheorem{lemma}[theorem]{Lemma}
\newtheorem{proposition}[theorem]{Proposition}
\theoremstyle{definition}
\theoremstyle{remark}
\newtheorem{remark}[theorem]{Remark}
\numberwithin{equation}{section}
\begin{document}

\title{A distinguished example of \\ filiform deformation}
\author{Joan Felipe Herrera-Granada}
\address{Universidad Nacional de Colombia sede Manizales}
\email{jfherrerag@unal.edu.co}
\author{Paulo Tirao}
\address{FaMAF - Universidad Nacional de Córdoba / CIEM - CONICET}
\email{ptirao@famaf.unc.edu.ar}
\author{Sonia Vera}
\address{FaMAF - Universidad Nacional de Córdoba / CIEM - CONICET}
\email{svera@famaf.unc.edu.ar}
\date{February 2018}
\subjclass[2010]{Primary 17B30; Secondary 17B99}
\keywords{Deformations of Lie algebras; filiform Lie algebras.}

\maketitle

\begin{abstract}
We exhibit an example of a filiform (complex) Lie algebra of dimension 13 with
all its ideals of codimension 1 being characteristically nilpotent,
and we construct a non trivial filiform deformation of it.
\end{abstract}

\section{Introduction}

It is already well known that a nilpotent Lie algebra $\n$ with a codimension 1 ideal $\h$ of rank $\ge 1$ 
(with non trivial semisimple derivations) is not rigid \cite{C,GH}.
From a semisimple derivation of $\h$ one constructs a non trivial linear deformation of $\n$ \cite{GH}.

If $\n$ is it self of positive rank, then it admits a codimension 1 ideal of positive rank.
So that the class of nilpotent Lie algebras with all its ideals of codimension 1 being characteristically nilpotent
(without any semisimple derivation) is contained in the class of characteristically nilpotent Lie algebras.
Let us call them \emph{strong characteristically nilpotent}.
It is not clear to us how large or how small is this class.
If there exists a rigid nilpotent Lie algebra, in opposition to Vergne's conjecture, 
it must be of this class.

Some time ago, Dietrich Burde told us about a family of filiform Lie algebras $\{\f_n\}$, 
that they suspect to be of this class \cite{BEdG}. 

In this paper we pick the first algebra of their list, $\f_{13}$,
we prove that it is strong characteristically nilpotent
and we construct a filiform non trivial deformation of it.

This example provides further strong evidence supporting Vergne's conjecture.

\section{The algebra $\f_{13}$}

In this paper we consider the filiform Lie algebra $\f_{13}$ (see \cite[\S 5]{BEdG}) defined over $\C$,
which in the basis $\{e_0,e_1,\dots,e_{12}\}$ is defined by:
\footnote{Notice that in \cite{BEdG} the basis is $\{e_1,e_2,\dots,e_{13}\}$.}

{\small
\begin{align*}
[e_0, e_i] &= e_{i+1}, \qquad \text{for $i=1\dots 11$} \\
& \\
[e_1, e_2] &= e_4, & [e_1, e_3] &= e_5, \\
[e_1, e_4] &= \tfrac{9}{10}e_6-e_8, & [e_1, e_5] &= \tfrac{4}{5}e_7-2e_9, \\
[e_1,e_6]  &= \tfrac{5}{7}e_8-\tfrac{335}{126}e_{10}+\tfrac{22105}{15246}e_{12}, & [e_1, e_7] &= \tfrac{9}{14}e_9-\tfrac{125}{42}e_{11}, \\ 
[e_1, e_8] &= \tfrac{7}{12}e_{10}-\tfrac{4421}{1452}e_{12}, & [e_1, e_9] &= \tfrac{8}{15}e_{11}, \\
[e_1, e_{10}] &= \tfrac{27}{55}e_{12}, \\ 
& \\
[e_2, e_3] &= \tfrac{1}{10}e_6+e_8, & [e_2, e_4] &= \tfrac{1}{10}e_7+e_9, \\
[e_2, e_5] &= \tfrac{3}{35}e_8+\tfrac{83}{126}e_{10}-\tfrac{22105}{15246}e_{12}, & [e_2, e_6] &= \tfrac{1}{14}e_9+\tfrac{20}{63}e_{11}, \\
[e_2, e_7] &= \tfrac{5}{84}e_{10}+\tfrac{697}{10164}e_{12}, & [e_2, e_8] &= \tfrac{1}{20}e_{11}, \\ 
[e_2, e_9] &= \tfrac{7}{165}e_{12}, \\ 
& \\
[e_3, e_4] &= \tfrac{1}{70}e_8+\tfrac{43}{126}e_{10}+\tfrac{22105}{15246}e_{12}, & [e_3, e_5] &= \tfrac{1}{70}e_9+\tfrac{43}{126}e_{11}, \\
[e_3, e_6] &= \tfrac{1}{84}e_{10}+\tfrac{7589}{30492}e_{12}, & [e_3, e_7] &= \tfrac{1}{105}e_{11}, \\
[e_3, e_8] &= \tfrac{1}{132}e_{12}, \\ 
& \\
[e_4, e_5] &= \tfrac{1}{420}e_{10}+\tfrac{313}{3388}e_{12}, & [e_4, e_6] &= \tfrac{1}{420}e_{11}, \\
[e_4, e_7] &= \tfrac{3}{1540}e_{12}, \\ 
& \\
[e_5, e_6] &= \tfrac{1}{2310}e_{12}.
\end{align*}
}

\begin{remark}
 In this paper $\C$ might be replaced by $\R$ or even $\Q$ with no further changes. 
 Notice that the algebras $\{\f_n\}$ in \cite{BEdG} are defined over $\Q$.
 Our main interest is Vergne's conjecture over $\C$.
\end{remark}

A generic codimension 1 ideal of $\f_{13}$ is of the form $\h_b=\langle e_0+b e_1,\dots,e_{12}\rangle$, for some $b\in\C$.
More precisely there is only one codimension 1 ideal left, the ideal $\h=\langle e_1,\dots,e_{12}\rangle$.
In this section we show that all these ideals are characteristically nilpotent. 

Let us start with the generic 12-dimensional ideal $\h_b=\langle e_0+b e_1,\dots,e_{12}\rangle$, for some $b\in\C$. 
Define a new basis $\langle f_0,f_1,\dots,f_{11}\rangle$ for $\h_b$ as follows:
\begin{eqnarray*}
f_0=e_0+b e_1,\quad f_1=e_2,\quad f_2=[f_0,f_1],\quad f_3=[f_0,f_2],\quad \dots, \quad f_{11}=[f_0,f_{10}].
\end{eqnarray*}
Notice that $\h_b$ is filiform for all $b$ and its central descending series is 
\[ \h_b^i=\langle f_{i+1},\dots,f_{11} \rangle. \]

In order to show that $\h_b$ is characteristically nilpotent, 
it is useful to consider the 8-dimensional filiform quotient 
\[ \overline{\h_b}=\h_b/\h_b^7. \]
By abuse of notation, the quotient may be described using the (same) basis $\{f_0,\dots,f_7\}$:
{\small
\begin{align*}
[f_0, f_i] &= f_{i+1}, \quad \text{for}\ 1\leq i\leq 6, \\
[f_1, f_2] &= \tfrac{1}{10}f_5-\tfrac{27}{100}b f_6+\left(1+\tfrac{5143}{7000}b^{2}\right)f_{7}, 
 & [f_1, f_3] &= \tfrac{1}{10}f_6-\tfrac{27}{100}b f_7, & [f_1, f_4] &= \tfrac{3}{35}f_7,\\
[f_2, f_3] &= \tfrac{1}{70}f_7.
\end{align*}
}  

\begin{lemma}\label{lemma:hb}
 The quotient $\overline{\h_b}$ is characteristically nilpotent.
\end{lemma}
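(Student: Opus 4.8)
The plan is to show that every derivation of $\overline{\h_b}$ is a nilpotent operator, which is exactly what it means for $\overline{\h_b}$ to be characteristically nilpotent (and then, by Engel's theorem, $\Der(\overline{\h_b})$ is a nilpotent Lie algebra). Since $\overline{\h_b}$ is generated by $f_0$ and $f_1$, a derivation $D$ is determined by $Df_0$ and $Df_1$. First I would note that $D$ preserves each term of the lower central series $C^i=\langle f_{i+1},\dots,f_7\rangle$ and also the characteristic ideal $\langle f_1,\dots,f_7\rangle=\{x\in\overline{\h_b}:(\operatorname{ad}x)^3=0\}$, so that $Df_0=af_0+\sum_{i=1}^{7}p_if_i$ and $Df_1=\lambda f_1+\sum_{i=2}^{7}q_if_i$ for scalars $a,\lambda,p_i,q_i$. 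Running the recursion $Df_{i+1}=[Df_0,f_i]+[f_0,Df_i]$ then computes all $Df_i$, and comparing leading terms shows that in the adapted basis the diagonal of $D$ is forced to be $(a,\lambda,\lambda+a,\lambda+2a,\dots,\lambda+6a)$.

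Next I would impose the derivation identity $D[x,y]=[Dx,y]+[x,Dy]$ on the four non-obvious brackets $[f_1,f_2],[f_1,f_3],[f_1,f_4],[f_2,f_3]$ (the identity for $[f_0,f_i]=f_{i+1}$ is built into the recursion, and the vanishing brackets such as $[f_3,f_4]=0$ are automatically compatible). The coefficient of $f_5$ in $D[f_1,f_2]=[Df_1,f_2]+[f_1,Df_2]$ reads $2\lambda+a=\lambda+4a$, so $\lambda=3a$ and the diagonal becomes $(a,3a,4a,5a,6a,7a,8a,9a)$. The decisive step is the coefficient of $f_6$ in the same identity: keeping the exact structure constant $-\tfrac{27}{100}b$ one obtains $189\,ab=216\,ab$, that is $27\,ab=0$; for $b\neq 0$ this forces $a=0$, hence $\lambda=0$ and the whole diagonal of $D$ vanishes.

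With $a=\lambda=0$, the derivation $D$ strictly raises the filtration: it sends each step of the flag $\overline{\h_b}\supset\langle f_1,\dots,f_7\rangle\supset C^1\supset\cdots\supset C^6\supset 0$ into the next one, so $D$ is nilpotent. Therefore every derivation of $\overline{\h_b}$ is nilpotent and $\overline{\h_b}$ is characteristically nilpotent. Finishing the remaining linear equations for the $p_i,q_i$ also produces $\Der(\overline{\h_b})$ explicitly, as an algebra of strictly-filtration-increasing maps.

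I expect the only genuine labor to be the bookkeeping in the second paragraph — writing $Df_2,\dots,Df_7$ and the four derivation identities without slips — but it is routine linear algebra. The conceptual content, and the feature that singles out this example, is that the ``tail'' $-\tfrac{27}{100}bf_6+\bigl(1+\tfrac{5143}{7000}b^2\bigr)f_7$ attached to $\tfrac{1}{10}f_5$ in $[f_1,f_2]$ has the wrong weight for any $\mathbb{Z}$-grading of $\overline{\h_b}$, and so obstructs every nonzero dilation; this is what kills the semisimple part of an arbitrary derivation.
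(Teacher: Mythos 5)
Your argument for $b\neq 0$ is correct and is essentially the paper's computation in a more economical packaging: triangularity of $D$ from preserved characteristic ideals, the diagonal expressed through the two parameters $a$ and $\lambda$, the relation $\lambda=3a$ (you extract it from the $f_5$-component of the $[f_1,f_2]$ identity, the paper from $E_{1,4}^{7}$), and then the $b$-weighted $f_6$-component of the $[f_1,f_2]$ identity killing the diagonal. Organizing everything through $Df_0$, $Df_1$ and the recursion $Df_{i+1}=[Df_0,f_i]+[f_0,Df_i]$ is a genuine bookkeeping improvement, since relations such as the paper's $m_{7,6}=m_{4,3}$ come for free and the off-diagonal unknowns cancel in the $f_6$-equation, exactly as you assert ($\tfrac{27}{100}b(4a-\lambda)=0$, i.e.\ your $27ab=0$ after $\lambda=3a$). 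One small repair: a derivation need not preserve the set $\{x:(\operatorname{ad}x)^3=0\}$ just because it is characteristic (that is an automorphism-invariant notion); either pass through $\exp(tD)$, or note that $\langle f_1,\dots,f_7\rangle$ is the centralizer of $C^4=\langle f_5,f_6,f_7\rangle$, which every derivation does preserve, or simply read off the $f_3$-component of the $[f_1,f_2]$ identity as the paper does ($E_{1,2}^{3}=-m_{1,2}$).

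The genuine issue is the case $b=0$, which you exclude explicitly while the lemma is stated for all $b\in\C$ (and $\h_0=\langle e_0,e_2,\dots,e_{12}\rangle$ is one of the codimension-one ideals that has to be covered). Your decisive equation says nothing at $b=0$; to be fair, the paper's own final equation $E_{1,2}^{6}$ degenerates in exactly the same way, so this is a shared gap rather than a defect of your route. But the gap is real and cannot be closed on the quotient: at $b=0$ set $Df_0=f_0-70f_1$, $Df_1=3f_1$ and extend by your recursion, which gives $Df_2=4f_2$, $Df_3=5f_3-7f_5-70f_7$, $Df_4=6f_4-14f_6$, $Df_5=7f_5-20f_7$, $Df_6=8f_6$, $Df_7=9f_7$; a direct check on the four nontrivial brackets shows this is a derivation of $\overline{\h_0}$, triangular with diagonal $(1,3,4,\dots,9)$, hence with nonzero semisimple part. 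So $\overline{\h_0}$ is not characteristically nilpotent, and the $b=0$ ideal would have to be treated on $\h_0$ itself (as the paper does for $\h$ in Lemma \ref{lemma:h}), not via the $8$-dimensional quotient. This also corrects your closing heuristic: the tail $\bigl(1+\tfrac{5143}{7000}b^2\bigr)f_7$ by itself does not obstruct a grading (it is present at $b=0$, where the derivation above exists); the obstruction your proof actually uses is the $-\tfrac{27}{100}b\,f_6$ term, which is exactly what disappears at $b=0$.
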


\begin{proof}
Let $D$ be a derivation of $\overline{\h_b}$.
Since $D$ preserves the central descending series of $\overline{\h_b}$, 
the matrix of $D$ in the basis $\{f_0,\dots,f_7\}$ has the following form: 
\small{
\begin{eqnarray*}
D=\begin{pmatrix}
 m_{1,1} & m_{1,2} & 0 & \cdots & 0\\
 m_{2,1} & m_{2,2}  &  0  & \cdots & 0\\
 m_{3,1} & m_{3,2} & m_{3,3}  & \cdots & 0\\
 \vdots & \vdots & \vdots  & \ddots & \vdots\\
 m_{8,1} & m_{8,2} & m_{8,3} & \cdots & m_{8,8}
 \end{pmatrix}
\end{eqnarray*}
}
For $0\le i<j\le 7$ and $0\le k\le 7$ let
\begin{eqnarray*}
E_{i,j}^{k} = \left(D[f_{i},f_{j}]-[Df_{i},f_{j}]-[f_{i},Df_{j}]\right)[k]
\end{eqnarray*}
be the coordinate respect to $f_k$ of $D[f_{i},f_{j}]-[Df_{i},f_{j}]-[f_{i},Df_{j}]$. 
Since $D$ is a derivation, $E_{i,j}^k=0$ for all $i,j,k$.
 
A direct evaluation yields $E_{1,2}^3=-m_{1,2}$, and hence $m_{1,2}=0$.
Also one directly obtains that, for $1\le i\le 6$,
\[ E_{0,i}^{i+1} = m_{i+2,i+2}-m_{1,1}-m_{i+1,i+1}, \]
and therefore recursively
\begin{equation}\label{eqn:1}
 m_{i+2,i+2} = i m_{1,1} + m_{2,2}, \qquad \text{for $1\le i\le 6$.} 
\end{equation}
Now $E_{1,4}^{7}=\frac{3}{35}m_{8,8}-\frac{3}{35}m_{2,2}-\frac{3}{35}m_{5,5}$, so that 
$m_{8,8}=m_{2,2}+m_{5,5}$ and using \eqref{eqn:1} 
\begin{equation}\label{eqn:2}
m_{8,8}=3 m_{1,1}+2 m_{2,2}.
\end{equation}
By combining \eqref{eqn:1} with \eqref{eqn:2} we get that $m_{2,2}=3 m_{1,1}$
and recursively that
\begin{eqnarray}\label{eqn:3}
m_{i+1,i+1} = (i+2) m_{1,1}, \qquad \text{for $1\le i \le 7$}.
\end{eqnarray}
Therefore $D$ is lower triangular with all its diagonal entries being integer multiples of $m_{1,1}$.

Finally, equations $E_{0,2}^{4}=0$, $E_{0,3}^{5}=0$ and $E_{0,4}^{6}=0$ combined, yield $m_{7,6}=m_{4,3}$. 
And since
\begin{eqnarray*}
E_{1,2}^6=\tfrac{1}{10}m_{7,6} - \tfrac{27}{100}b m_{7,7} + \tfrac{27}{100}b m_{2,2} + \tfrac{27}{100}b m_{3,3} - \tfrac{1}{10}m_{4,3},
\end{eqnarray*}
from \eqref{eqn:3} it follows that $m_{1,1}=0$ and then $D$ is nilpotent.
Consequently, $\h_b$ is characteristically nilpotent.
\end{proof}

Let us now come to consider the single 12-dimensional ideal $\h=\langle e_1,\dots,e_{12}\rangle$.
The proof that it is in fact characteristically nilpotent is analogous to the proof of Lemma \ref{lemma:hb}.
Notice that $\h$ is not filiform, it is 6-step nilpotent, and its central ascending series is given by:
\[
\begin{gathered}
 \z=\langle e_{11},e_{12} \rangle, \quad  \z^2=\langle e_{9},\dots,e_{12} \rangle, \quad \z^3=\langle e_{7},\dots,e_{12} \rangle, \\
 \z^4=\langle e_{5},\dots,e_{12} \rangle, \quad \z^5=\langle e_{3},\dots,e_{12} \rangle, \quad \z^6=\h.
\end{gathered}
\]

\begin{lemma}\label{lemma:h}
 The ideal $\h$ is characteristically nilpotent.
\end{lemma}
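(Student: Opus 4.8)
The plan is to follow the proof of Lemma~\ref{lemma:hb}, now in the basis $\{e_1,\dots,e_{12}\}$ of $\h$. Let $D$ be a derivation of $\h$. Since $D$ preserves every term of the lower central series $\h\supset\h^1=\langle e_4,\dots,e_{12}\rangle\supset\h^2=\langle e_6,\dots,e_{12}\rangle\supset\cdots$ and every term of the upper central series $\z\subset\z^2\subset\cdots$, and since these two filtrations interlace into a complete flag
\[
0\subset\langle e_{12}\rangle\subset\langle e_{11},e_{12}\rangle\subset\langle e_{10},e_{11},e_{12}\rangle\subset\cdots\subset\langle e_4,\dots,e_{12}\rangle\subset\langle e_3,\dots,e_{12}\rangle\subset\h
\]
whose successive quotients are one dimensional except the last, the matrix of $D$ in the basis $\{e_1,\dots,e_{12}\}$ is lower triangular apart from the single entry $m_{1,2}$ (the $e_1$-component of $De_2$). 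Writing, exactly as in Lemma~\ref{lemma:hb}, $E_{i,j}^k$ for the $e_k$-coordinate of $D[e_i,e_j]-[De_i,e_j]-[e_i,De_j]$, a direct evaluation gives $E_{2,3}^5=-m_{1,2}$, because $[e_2,e_3]=\tfrac1{10}e_6+e_8\in\h^2$. Hence $m_{1,2}=0$, the matrix of $D$ is lower triangular, and its eigenvalues are its diagonal entries $m_{1,1},\dots,m_{12,12}$; it suffices to prove these all vanish.

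Next I would run the ``leading order'' part of the argument: for each $k\ge 4$ choose a bracket $[e_i,e_j]$ whose lowest-index term is a nonzero multiple of $e_k$ (for instance $e_4$ from $[e_1,e_2]$; $e_6$ from $[e_1,e_4]$ or $[e_2,e_3]$; $e_7$ from $[e_1,e_5]$ or $[e_2,e_4]$; $e_8$ from $[e_1,e_6]$; and so on, up to $e_{12}$ from $[e_1,e_{10}]$). For such a choice the identity $E_{i,j}^k=0$ reduces cleanly to $m_{k,k}=m_{i,i}+m_{j,j}$, with no off-diagonal contribution, since every other structure constant involved sits strictly higher in the filtration. Chaining these relations exactly as in Lemma~\ref{lemma:hb} forces $m_{2,2}=\tfrac32 m_{1,1}$, $m_{3,3}=2m_{1,1}$, and then $m_{i,i}=\tfrac{i+1}{2}\,m_{1,1}$ for every $i$; equivalently, the diagonal of $D$ is forced to be the weight vector $w(e_i)=i+1$ up to the overall scalar $m_{1,1}$. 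Everything now reduces to showing $m_{1,1}=0$.

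The crux, as in Lemma~\ref{lemma:hb}, is that the bracket of $\f_{13}$ — hence of $\h$ — is \emph{not} homogeneous for the weight $w(e_i)=i+1$: the lower-order terms carrying the large denominators violate it. For instance the $e_{10}$- and $e_{12}$-coordinates of $[e_1,e_6]=\tfrac57 e_8-\tfrac{335}{126}e_{10}+\tfrac{22105}{15246}e_{12}$ have weights $11$ and $13$, while $[e_1,e_6]$ has weight $9$; so the identity $E_{1,6}^{10}=0$ becomes
\[
-\tfrac{335}{126}\,(m_{10,10}-m_{1,1}-m_{6,6})=-\tfrac57 m_{10,8}+\tfrac1{84}m_{3,1}+\tfrac7{12}m_{8,6},
\]
whose left-hand side equals $-\tfrac{335}{126}m_{1,1}$ by the previous paragraph. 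To conclude $m_{1,1}=0$ one must show the right-hand side vanishes, which is done by first deriving a handful of auxiliary relations among the off-diagonal entries appearing there, from further identities $E_{i,j}^k=0$ — just as $m_{7,6}=m_{4,3}$ was extracted in Lemma~\ref{lemma:hb} — and substituting. I expect this bookkeeping to be the only genuine obstacle: it is entirely routine, but noticeably longer than in Lemma~\ref{lemma:hb}, because $\h$ is bigger and, lacking an $e_0$-type element inside it, offers no single derivation to drive the clean recursions. Once $m_{1,1}=0$, the matrix of $D$ is lower triangular with zero diagonal, hence $D$ is nilpotent; therefore $\h$ is characteristically nilpotent.
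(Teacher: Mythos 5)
Your set-up is correct and in places cleaner than the paper's: using both the lower and the upper central series of $\h$ does interlace into a characteristic flag with one-dimensional steps below $\langle e_3,\dots,e_{12}\rangle$, so the matrix of $D$ is lower triangular except possibly for $m_{1,2}$, and your evaluation $E_{2,3}^5=-m_{1,2}$ is right (the paper instead only uses the upper central series, gets a $2\times 2$-block triangular form, and kills $m_{1,2},m_{3,4},m_{5,6},\dots$ one at a time). Your leading-term analysis of the diagonal is also sound: taking the lowest-index coordinate of $[e_1,e_j]$ together with $[e_2,e_3]$ and $[e_2,e_4]$ does give exactly the relations $m_{6,6}=2m_{1,1}+m_{2,2}=m_{2,2}+m_{3,3}$ and $m_{7,7}=2m_{1,1}+m_{3,3}=m_{1,1}+2m_{2,2}$, hence $m_{i,i}=\tfrac{i+1}{2}m_{1,1}$, which is what the paper obtains. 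Your sample identity $E_{1,6}^{10}=0$ is also computed correctly.

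The genuine gap is the last step, which you yourself flag as ``the only genuine obstacle'' and then do not carry out. Asserting that ``a handful of auxiliary relations among the off-diagonal entries'' will make the right-hand side vanish is not a proof: a priori the full system $\{E_{i,j}^k=0\}$ could be consistent with $m_{1,1}\neq 0$, the inhomogeneity of the structure constants for the weight $w(e_i)=i+1$ in this particular basis notwithstanding (an algebra of positive rank can look inhomogeneous in an ill-adapted basis). The entire content of the lemma is precisely the verification that it is not consistent, and that verification is the bulk of the paper's proof: one first solves $E$-identities for $m_{6,4}$, $m_{7,5}$, $m_{8,6}$ in terms of $m_{3,1}$, $m_{4,2}$, $m_{5,3}$ and $m_{1,1}$, then forms the specific combinations (the paper's $E1,\dots,E5$, e.g.\ $9E_{1,2}^{7}-E_{0,3}^{7}$, $8E_{1,3}^{8}-E_{0,4}^{8}$, $\tfrac{3}{25}E_{0,5}^{9}-E_{1,4}^{9}$) that successively eliminate $m_{4,2}-m_{5,3}$ and $m_{3,1}$ and leave a nonzero multiple of $m_{1,1}$, forcing $m_{1,1}=0$. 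Until you exhibit such a chain of eliminations (or an equivalent rank computation on the linear system), the claim that $D$ has zero diagonal, and hence the lemma, is not established.
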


\begin{proof}
Let $D$ be a derivation of $\h$. 
Since it preserves the central ascending series of $\h$,
the matrix of $D$ in the basis $\{e_1,\dots,e_{12}\}$ is of the following
$(2\times 2)$-block lower triangular form
\small{
\begin{eqnarray*}
D=\begin{pmatrix}
 m_{1,1}  & m_{1,2} \\
          & m_{2,2} \\
 & &  m_{3,3} & m_{3,4} \\
 & &          & m_{4,4} \\
 & \text{\LARGE$\ast$} & & & \ddots \\
 & & & & & m_{11,11} & m_{11,12} \\
 & & & & & & m_{12,12}
 \end{pmatrix}
\end{eqnarray*}
}
For $1\le i<j\le 12$ and $1\le k\le 12$, let as in the proof of Lemma \ref{lemma:hb},
\begin{eqnarray*}
E_{i,j}^{k} = \left(D[e_{i},e_{j}]-[De_{i},e_{j}]-[e_{i},De_{j}]\right)[k].
\end{eqnarray*}
Recall that $E_{i,j}^k=0$ for all $i,j,k$.

The equations
\begin{eqnarray*}
E_{1,2}^{3}=0,\quad E_{1,4}^{5}=0,\quad E_{1,6}^{7}=0,\quad E_{1,8}^{9}=0,\quad E_{1,10}^{11}=0,
\end{eqnarray*}
considered in the given order, yield directly
\begin{eqnarray*}
m_{3,4}=0,\quad m_{5,6}=0,\quad m_{7,8}=0,\quad m_{9,10}=0,\quad  m_{11,12}=0.
\end{eqnarray*}
Then since $E_{2,4}^{6}=m_{1,2}-\tfrac{1}{9}m_{3,4}$, $m_{3,4}=0$ and thus $D$ is lower triangular. 
\small{
\begin{eqnarray*}
D=\begin{pmatrix}
 m_{1,1} & 0 & \cdots & 0\\
 m_{2,1} & m_{2,2} & \cdots & 0\\
 \vdots & \vdots & \ddots & \vdots\\
 m_{12,1} & m_{12,2} & \cdots & m_{12,12}
 \end{pmatrix}
\end{eqnarray*}
}

Now from the equations $E_{0,j}^{j+2}=0$, for $1\leq j\leq 9$, we get that
\begin{eqnarray}
m_{4,4} &=& m_{1,1}+m_{2,2} \notag \\
m_{5,5} &=& m_{1,1}+m_{3,3} \notag \\
m_{6,6} &=& 2m_{1,1}+m_{2,2} \label{eqn:m66} \\
m_{7,7} &=& 2m_{1,1}+m_{3,3} \label{eqn:m77} \\
m_{8,8} &=& 3m_{1,1}+m_{2,2} \notag \\
m_{9,9} &=& 3m_{1,1}+m_{3,3} \notag \\
m_{10,10} &=& 4m_{1,1}+m_{2,2} \notag \\
m_{11,11} &=& 4m_{1,1}+m_{3,3} \notag \\
m_{12,12} &=& 5m_{1,1}+m_{2,2} \notag 
\end{eqnarray}
And from the equations $E_{1,2}^{5}=0$ and $E_{1,3}^{6}=0$, we get that 
\begin{eqnarray}
m_{6,6} &=& m_{2,2}+m_{3,3} \label{eqn:m66B} \\
m_{7,7} &=& m_{1,1}+2m_{2,2} \label{eqn:m77B}
\end{eqnarray}

From \eqref{eqn:m66} and \eqref{eqn:m66B}, it follows that $m_{3,3}=2m_{1,1}$, 
and from \eqref{eqn:m77} and \eqref{eqn:m77B}, it follows that $m_{2,2}=\tfrac{3}{2}m_{1,1}$.
Therefore
\[ m_{i,i}=\tfrac{(i+1)}{2}m_{1,1}\quad \text{for}\quad 2\leq i\leq 12,\]
and thus
\small{
\begin{eqnarray*}
D=\begin{pmatrix}
 \frac{2}{2}m_{1,1} & 0 & \cdots & \cdots & 0 \\
 m_{2,1} & \frac{3}{2}m_{1,1} & 0 & \cdots & 0 \\
 m_{3,1} & m_{3,2} & \frac{4}{2}m_{1,1} & \cdots & 0 \\
 \vdots & \vdots & \vdots & \ddots & \vdots \\
 m_{12,1} & m_{12,2} & m_{12,3} & \cdots & \frac{13}{2}m_{1,1}
 \end{pmatrix}
\end{eqnarray*}
}

We show now that $m_{1,1}=0$.
To this end we consider the following sequence of equations and its consequences.

First, from $E_{0,1}^{5}=0$ we get $m_{6,4}=-\tfrac{1}{10}m_{3,1}+\tfrac{9}{10}m_{4,2}$, 
and then from $9E_{1,2}^{7}-E_{0,3}^{7}=0$ we get that
\[ E1=10m_{1,1}+\tfrac{27}{35}m_{4,2}-\tfrac{27}{35}m_{5,3}-\tfrac{2}{35}m_{3,1}=0. \]

Second, from $E_{0,2}^{6}=0$ we get $m_{7,5}=\tfrac{4}{5}m_{5,3}$, 
and then from $8E_{1,3}^{8}-E_{0,4}^{8}=0$ we get that
\[ E2=10m_{1,1}-\tfrac{18}{35}m_{4,2}+\tfrac{18}{35}m_{5,3}+\tfrac{1}{14}m_{3,1}=0. \]

Now from $\tfrac{3}{2}E2+E1=0$ it follows that
\[ E3=25m_{1,1}+\tfrac{1}{20}m_{3,1}=0 \]

From $E_{0,3}^{7}=0$ we get $m_{8,6}=\tfrac{10}{9}m_{1,1}-\tfrac{4}{63}m_{3,1}+\tfrac{5}{7}m_{4,2}$, 
and then from
$\tfrac{3}{25}E_{0,5}^{9}-E_{1,4}^{9}=0$ we get that  
\[ E4=-\tfrac{19}{18}m_{1,1}-\tfrac{1}{21}m_{4,2}+\tfrac{1}{21}m_{5,3}+\tfrac{19}{6300}m_{3,1}=0. \]

Finally, from $\tfrac{35}{18}E2-21 E4$ we get that
\[ E5=\tfrac{749}{18}m_{1,1}+\tfrac{17}{225}m_{3,1}=0 \]
and from $20 E3-\tfrac{225}{17}E5=0$ it follows that $m_{1,1}=0$, 
and therefore $D$ is nilpotent. 
\end{proof}

\begin{lemma}\label{lemma:n}
 Let $\n$ be a nilpotent Lie algebra and $\n^i$ the $i$-th term of its central descending series.
 If the quotient $\n/\n^i$ is characteristically nilpotent, then $\n$ is characteristically nilpotent.
\end{lemma}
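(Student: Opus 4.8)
The plan is to use the standard fact that a finite–dimensional Lie algebra is characteristically nilpotent if and only if it admits no non-zero semisimple derivation; equivalently, every derivation of it is nilpotent, the passage between the two formulations being made via the (derivation-valued) semisimple part of the Jordan decomposition. Accordingly, let $D$ be a semisimple derivation of $\n$, and aim to prove $D=0$.

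First I would record that each term $\n^j$ of the central descending series is a characteristic ideal: an immediate induction on $j$, using the Leibniz rule and $\n^{j+1}=[\n,\n^j]$, gives $D'(\n^j)\subseteq\n^j$ for every $D'\in\Der(\n)$. In particular $\n^i$ is $D$-invariant, so $D$ induces a derivation $\bar D$ on the quotient $\n/\n^i$; since $D$ is semisimple and $\n^i$ is $D$-invariant, $\bar D$ is semisimple as well. Since $\n/\n^i$ is characteristically nilpotent, $\bar D$ is nilpotent; being also semisimple, $\bar D=0$, i.e. $D(\n)\subseteq\n^i\subseteq[\n,\n]=\n^1$.

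The core of the argument is now the claim that a derivation of a nilpotent Lie algebra which maps $\n$ into $[\n,\n]=\n^1$ is necessarily nilpotent. I would prove by induction on $j\ge 0$ (with $\n^0:=\n$) that $D(\n^j)\subseteq\n^{j+1}$: the base case is the hypothesis, and for the step, from $\n^{j+1}=[\n,\n^j]$, the Leibniz rule and the inductive hypothesis,
\[
 D(\n^{j+1})\subseteq[D\n,\n^j]+[\n,D\n^j]\subseteq[\n^1,\n^j]+[\n,\n^{j+1}]\subseteq\n^{j+2},
\]
where $[\n^1,\n^j]\subseteq\n^{j+2}$ is the standard inclusion for the central descending series. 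Iterating, $D^m(\n)\subseteq\n^m=0$ for $m$ large, so $D$ is nilpotent; being also semisimple, $D=0$, and hence $\n$ has no non-zero semisimple derivation. The one point that needs care is this very reduction: it is essential to work with semisimple derivations, so that $\bar D$ can be concluded to \emph{vanish} rather than merely be nilpotent — the latter would not suffice to start the induction on the central descending series. Everything else is routine manipulation of the lower central series.
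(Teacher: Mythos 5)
Your proof is correct, but it finishes the argument by a different mechanism than the paper. Both proofs share the same skeleton: characteristic nilpotency is treated as the absence of nonzero semisimple derivations (via the derivation-valued Jordan decomposition, which the paper uses implicitly), and a semisimple derivation $D$ of $\n$ induces a semisimple derivation $\bar D$ of $\n/\n^i$ because $\n^i$ is a characteristic ideal. From there the paper argues positively: $\n$ is generated by the set $S=\n-[\n,\n]$, a nontrivial derivation is nontrivial on $S$, and the induced derivation is asserted to remain nontrivial on the projection $\overline{S}$, contradicting characteristic nilpotency of the quotient. You instead argue that $\bar D$ must vanish, so $D(\n)\subseteq\n^i\subseteq[\n,\n]$, and then prove the auxiliary fact that a derivation of a nilpotent Lie algebra with image in the commutator ideal is nilpotent (via $D(\n^j)\subseteq\n^{j+1}$, using $[\n^1,\n^j]\subseteq\n^{j+2}$), so $D$ is simultaneously semisimple and nilpotent, hence zero. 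Your route is more self-contained: the paper's passage from ``$D$ nontrivial on $S$'' to ``$\bar D$ nontrivial on $\overline{S}$'' is precisely the point it leaves implicit (one needs the eigenspace decomposition of $D$, or an argument like yours, since a priori $D$ could send elements of $S$ into $\n^i$), whereas your nilpotency lemma supplies a complete justification of that step. Both arguments tacitly assume $i\ge 1$, so that $\n^i\subseteq[\n,\n]$; this is the only case of interest and the one used in the paper ($i=7$).
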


\begin{proof}
Any nilpotent Lie algebra $\g$ is generated, as a Lie algebra, by the set $\g-[\g,\g]$. 
So that $\n$ is generated by $S=\n-[\n,\n]$ and $\n/\n^i$ by the projection set $\overline{S}$.

A non trivial semisimple derivation $D$ of $\n$ is non trivial in $S$,
hence the induced derivation on $\n/\n^i$ is semisimple and non trivial in $\overline{S}$.

Therefore if the quotient algebra $\n/\n^i$ has no semisimple derivations, 
then the algebra $\n$ has not semisimple derivations as well.
\end{proof}

Lemmas \ref{lemma:h}, \ref{lemma:hb} and \ref{lemma:n} prove the following result.

\begin{proposition}
 The algebra $\f_{13}$ is strong characteristically nilpotent.
\end{proposition}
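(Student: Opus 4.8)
The plan is to reduce the statement to the three lemmas just proved. By definition, $\f_{13}$ is strong characteristically nilpotent precisely when it is nilpotent and every one of its codimension $1$ ideals is characteristically nilpotent; nilpotency of $\f_{13}$ is evident from the defining brackets (indeed $\f_{13}$ is filiform), so the real content is the claim about the codimension $1$ ideals.

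First I would pin down the list of codimension $1$ ideals. For any Lie algebra a codimension $1$ subspace containing the derived algebra is an ideal, and conversely a codimension $1$ ideal $I$ has one-dimensional, hence abelian, quotient and so contains $[\f_{13},\f_{13}]$. A quick inspection of the structure constants gives $[\f_{13},\f_{13}]=\langle e_2,\dots,e_{12}\rangle$, of codimension $2$, so the codimension $1$ ideals of $\f_{13}$ are in bijection with the lines in the two-dimensional space $\f_{13}/[\f_{13},\f_{13}]=\langle\bar e_0,\bar e_1\rangle$. These are exactly the ideals $\h_b=\langle e_0+b e_1,\dots,e_{12}\rangle$ for $b\in\C$ together with the single remaining ideal $\h=\langle e_1,\dots,e_{12}\rangle$, as already recorded above.

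Next I would dispatch each family. For $\h$ this is immediate: Lemma \ref{lemma:h} states that $\h$ is characteristically nilpotent. For $\h_b$ I would pass to the quotient: Lemma \ref{lemma:hb} gives that $\overline{\h_b}=\h_b/\h_b^7$ is characteristically nilpotent, and then Lemma \ref{lemma:n}, applied with $\n=\h_b$ and $i=7$, promotes this to $\h_b$ itself being characteristically nilpotent. Since $b\in\C$ is arbitrary, every codimension $1$ ideal of $\f_{13}$ is characteristically nilpotent, whence $\f_{13}$ is strong characteristically nilpotent; as observed in the introduction, this also forces $\f_{13}$ to be characteristically nilpotent.

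I do not anticipate any genuine obstacle at this stage: all the substance lives in the lemmas, the delicate part being the long chain of linear eliminations in the proof of Lemma \ref{lemma:h} that extracts $m_{1,1}=0$ from the derivation equations. The only point in the present assembly needing a word of care is the completeness of the list of codimension $1$ ideals, and that is the short computation of $[\f_{13},\f_{13}]$ indicated above.
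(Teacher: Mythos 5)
Your proposal is correct and follows the paper's own route: the paper deduces the proposition directly from Lemmas \ref{lemma:h}, \ref{lemma:hb} and \ref{lemma:n}, exactly as you do, with the classification of codimension $1$ ideals (as $\h_b$, $b\in\C$, together with $\h$, since they must contain $[\f_{13},\f_{13}]=\langle e_2,\dots,e_{12}\rangle$) implicit in the paper and made explicit in your write-up.
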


\section{A non-trivial deformation of $\f_{13}$}

In this section let $\f=\f_{13}$.
For the construction of a non trivial deformation of $\f$ we follow closely the construction given in \cite[\S 4]{TV}.
In this case $\h=[\f,\f]=\langle e_2,\dots,e_{12}\rangle$ and we choose
$D\in\Der(\h)$ given by
\[ D:e_2\mapsto e_9, \quad e_3\mapsto e_{10}, \quad e_4\mapsto e_{11}, \quad e_5\mapsto e_{12} \]
and $D(e_i)=0$, for $i=6\dots 12$.
So that, for $t\in\C$ the deformed algebra $\f_t$ is given by the bracket $[\ ,\ ]_t$ described by:
\[ [e_0,e_1]_t=[e_0,e_1], \qquad [e_0,h]_t=[e_0,h], \qquad [e_1,h]_t=[e_1,h]+tD(h), \] 
and $[h,h']_t=[h,h']$, for all $h,h'\in\h$.

It turns out that $\f_t\simeq\f$ if and only if $t=0$.
To prove this, let $g$ be an isomorphism between $\f_t$ and $\f$, and let $[g]$
be its matrix with respect to the basis $\{e_0,\dots,e_{12}\}$ of $\f_t$ and $\f$. 
Since $g$ preserves the central descending series, then $[g]$ is of the form:
\begin{equation}\label{eqn:g}
[g] = \begin{pmatrix}
 m_{1,1} & m_{1,2} & 0 & \cdots & 0\\
 m_{2,1} & m_{2,2}  &  0  & \cdots & 0\\
 m_{3,1} & m_{3,2} & m_{3,3}  & \cdots & 0\\
 \vdots & \vdots & \vdots  & \ddots & \vdots\\
 m_{13,1} & m_{13,2} & m_{13,3} & \cdots & m_{13,13}
 \end{pmatrix}
\end{equation}

That $g$ is an isomorphism is equivalent to 
\[ E_{i,j}=g[e_i,e_j]_t-[ge_i,ge_j]=0 \]
for all $0\le i<j\le 12$.
If $E_{i,j}^k$ is the coefficient of $e_k$ in $E_{i,j}$, 
that is \[ E_{i,j} = g[e_i,e_j]_t-[ge_i,ge_j]=\sum_k E_{i,j}^k e_k, \]
then $g$ is an isomorphism if and only if $E_{i,j}^k=0$, 
for all $0\le i<j\le 12$ and $0\le k\le 12$.

We show first that the matrix $[g]$ above is lower triangular
and its diagonal entries are all equal to 1 or equal to $\pm 1$.

\begin{lemma}\label{lemma:mii}
 In the matrix $[g]$ in \eqref{eqn:g}, $m_{1,2}=0$ and $m_{i,i}=m_{1,1}^{i}$, 
 for $2\le i\le 13$.
 Moreover, $m_{2,1}=0$ and $m_{1,1}=1$ or $m_{1,1}=-1$. 
\end{lemma}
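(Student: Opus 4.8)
The plan is to exploit the same "preservation of the central descending series" bookkeeping used in the earlier lemmas, now applied to the isomorphism $g$ rather than to a derivation. Since $g$ is an isomorphism of filiform Lie algebras, it maps $\f_t^i$ onto $\f^i$, and because both central descending series are $\langle e_{i+1},\dots,e_{12}\rangle$ (shifted appropriately), the matrix $[g]$ is block lower triangular of the displayed shape; in particular the entries $m_{i,j}$ with $j>i$ in the first two columns vanish except possibly $m_{1,2}$, which is why that one must be handled by hand. First I would write down $E_{1,2}^3$: since $[e_1,e_2]_t=e_4$ and the top-left $2\times 2$ block of $[g]$ controls where $e_1,e_2$ go modulo higher terms, a direct evaluation gives $E_{1,2}^3$ proportional to $m_{1,2}$, forcing $m_{1,2}=0$, exactly as $E_{1,2}^3=-m_{1,2}$ did in Lemma~\ref{lemma:hb}.

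Next I would pin down the diagonal. The relations $E_{0,i}^{i+1}=0$ coming from $[e_0,e_i]_t=e_{i+1}$ (for $1\le i\le 11$) will read $m_{i+2,i+2}=m_{1,1}\,m_{i+1,i+1}$ — a multiplicative version of \eqref{eqn:1}, because here we are comparing $g$ applied to a bracket with a bracket of $g$'s, so the diagonal entries multiply rather than add. Since $m_{1,1}$ and $m_{2,2}$ are nonzero ($g$ is invertible), this recursion gives $m_{i,i}=m_{2,2}\,m_{1,1}^{\,i-2}$ for $2\le i\le 13$. To get $m_{2,2}=m_{1,1}^2$ I would use a bracket relation among the $e_i$ with small index, the natural candidate being $E_{1,2}^4=0$: the leading contribution of $g[e_1,e_2]_t=g(e_4)$ is $m_{4,4}e_4+\cdots$, while $[ge_1,ge_2]$ contributes $m_{1,1}m_{2,2}[e_1,e_2]+\cdots=m_{1,1}m_{2,2}e_4+\cdots$, so $m_{4,4}=m_{1,1}m_{2,2}$; combined with $m_{4,4}=m_{2,2}m_{1,1}^2$ from the recursion this yields $m_{1,1}=1$ after dividing by the nonzero $m_{2,2}m_{1,1}$ — wait, that gives $m_{1,1}^2=m_{1,1}$, hence $m_{1,1}=1$, and then $m_{i,i}=m_{2,2}$ for all $i\ge 2$. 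To instead land on $m_{1,1}=\pm1$ and $m_{i,i}=m_{1,1}^i$ one should use a relation that is quadratic in the index-$1$/index-$2$ data in a genuinely nonlinear way; the cleanest is to look further out, e.g.\ compare $E$ on a bracket like $[e_1,e_4]_t$ or use $[e_2,e_3]$, whose right-hand side lives in degree $6$: this produces $m_{6,6}=m_{2,2}m_{3,3}$, and feeding in $m_{3,3}=m_{2,2}m_{1,1}$ and $m_{6,6}=m_{2,2}m_{1,1}^4$ gives $m_{1,1}^4=m_{2,2}m_{1,1}$, i.e.\ $m_{2,2}=m_{1,1}^3$; together with the $E_{1,2}^4$ relation $m_{4,4}=m_{1,1}m_{2,2}$ versus $m_{4,4}=m_{2,2}m_{1,1}^2$ this forces $m_{1,1}=1$ again — so in fact the honest statement is that one first proves $m_{i,i}=m_{1,1}^i$ (equivalently $m_{2,2}=m_{1,1}^2$) from one such relation and then, separately, deduces $m_{1,1}\in\{1,-1\}$ from a determinant/normalization relation, most likely the one involving the longest bracket $[e_5,e_6]_t=\tfrac{1}{2310}e_{12}$ or the top corner $E_{0,11}^{12}$, which after substituting the diagonal shape reads $m_{1,1}^{13}=m_{1,1}\cdot m_{1,1}^{12}$ trivially and hence must be replaced by a relation with a numerical coefficient that is not a pure monomial — precisely the "bracket of brackets" relations such as $9\,[e_1,e_4]=[e_1,[e_0,e_3]]$-type identities internal to $\f$, which give an equation of the form $m_{1,1}^2=1$.

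Finally I would establish $m_{2,1}=0$. The relevant equation is the one that was $E_{2,4}^6$ in Lemma~\ref{lemma:h}; in the present setting an identity of that flavour — a bracket whose value already forces the off-diagonal first-column entry to be visible, e.g.\ $E_{1,3}^5$ or $E_{0,1}^{?}$ after using that $[e_0,e_1]_t=[e_0,e_1]$ is \emph{undeformed} — will express a multiple of $m_{2,1}$ in terms of entries already known to vanish (the $m_{i,j}$ with $j>i$ in the first two columns), yielding $m_{2,1}=0$. Concretely, since $[e_0,e_1]_t=e_2$ exactly and $g$ is triangular on the rest, $E_{0,1}^3=0$ should read (up to nonzero scalar) $m_{3,3}\cdot(\text{something})=m_{1,1}m_{2,1}\cdot(\text{something})+\cdots$, isolating $m_{2,1}$.

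The main obstacle I anticipate is not any single computation but choosing, among the many bracket relations, the \emph{minimal} set that (i) is linear enough to solve sequentially and (ii) actually produces the nonlinear constraint $m_{1,1}^2=1$ rather than the vacuous $m_{1,1}^{k+1}=m_{1,1}\cdot m_{1,1}^{k}$; every relation coming from $[e_0,e_i]_t=e_{i+1}$ is "homogeneous" and will only give the recursion, so one must dig into the genuinely $\f_{13}$-specific structure constants (the ones with the awkward rational coefficients) to break the scaling symmetry — and one has to check that the particular bracket chosen is unaffected by the deformation term $tD$, i.e.\ that it does not involve $e_1$ paired with an element on which $D$ is nonzero, or else track the $t$-dependence carefully. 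Once $m_{1,1}=\pm1$ and $m_{i,i}=m_{1,1}^i$ are in hand, the triangularity of $[g]$ (already forced by preservation of the series) together with $m_{1,2}=m_{2,1}=0$ completes the statement.
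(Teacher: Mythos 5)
Your overall skeleton (triangularity of $[g]$ from the central descending series, the multiplicative recursion from $E_{0,i}^{i+1}=0$, and the need to invoke the non-graded structure constants of $\f_{13}$ to break the scaling symmetry) is the same as the paper's, but the execution contains a genuine gap driven by a bookkeeping error. In $[ge_1,ge_2]$ the leading coefficient is $m_{2,2}m_{3,3}$, not $m_{1,1}m_{2,2}$ (the basis starts at $e_0$, so $ge_j$ is column $j+1$), and the $e_4$-coefficient of $g(e_4)$ is $m_{5,5}$, not $m_{4,4}$. The correct reading of $E_{1,2}^4=0$ is $m_{5,5}=m_{2,2}m_{3,3}$, i.e.\ $m_{1,1}^3m_{2,2}-m_{1,1}m_{2,2}^2=0$, which yields exactly $m_{2,2}=m_{1,1}^2$ and does \emph{not} force $m_{1,1}=1$; your shifted indices produced the false conclusions $m_{1,1}=1$ and later $m_{2,2}=m_{1,1}^3$, which contradict the statement you are proving, and you never resolve this other than by conjecturing that some other relation must give $m_{1,1}^2=1$. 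That missing relation is the heart of the lemma: in the paper one first solves $E_{0,1}^4=0,\dots,E_{0,5}^8=0$ for the subdiagonal entries $m_{5,3},m_{6,4},m_{7,5},m_{8,6},m_{9,7}$ (in terms of $m_{3,1},m_{4,2}$), then extracts from $E_{1,2}^6=0$ and $E_{1,4}^8=0$ the two relations $2m_{1,1}^2m_{4,2}-m_{3,2}^2=0$ and $-6m_{1,1}^2m_{4,2}+35m_{1,1}^6-35m_{1,1}^4+3m_{3,2}^2=0$, whose combination $3E1+E2=35m_{1,1}^4(m_{1,1}-1)(m_{1,1}+1)$ eliminates both off-diagonal unknowns at once. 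No argument of this kind (nor any concrete substitute) appears in your proposal; pointing vaguely at $[e_5,e_6]$, $E_{0,11}^{12}$ or ``bracket of brackets'' identities does not produce the equation $m_{1,1}^2=1$.

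The step $m_{2,1}=0$ is also not established by the equations you name: $E_{0,1}^3=0$ reads $m_{4,3}=m_{1,1}m_{3,2}$ and $E_{1,3}^5=0$ is again a diagonal relation; neither involves $m_{2,1}$. The entry $m_{2,1}$ only becomes visible after using $E_{0,1}^3,E_{0,2}^4,E_{0,3}^5$ to express $m_{4,3},m_{5,4},m_{6,5}$ and then evaluating $E_{1,2}^5=2m_{1,1}^4m_{2,1}$. Finally, your concern about the deformation term is legitimate but left unchecked: since $D$ takes values in $\langle e_9,\dots,e_{12}\rangle$, the terms $tD$ do not perturb any of the coefficient equations with $k\le 8$ used above, and this should be verified rather than flagged. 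As it stands, the proposal identifies the right kind of argument but does not carry out (and in places miscomputes) the steps that actually prove $m_{2,2}=m_{1,1}^2$, $m_{2,1}=0$ and $m_{1,1}=\pm1$.
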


\begin{proof}
That $m_{1,2}=0$ follows directly from the identity 
\[ 0=E_{1,11}^{12}=-m_{1,2}m_{12,12}, \]
which is easy to evaluate.
So we set $m_{1,2}=0$.

By a direct computation, we get that 
\[ E_{0,i}^{i+1}=m_{i+2,i+2}-m_{1,1}m_{i+1,i+1}, \]
for $1 \le i\le 11$,
and hence $m_{i,i}=m_{2,2}m_{1,1}^{i-2}$, for $2\le i\le 13$.
So we have in addition that 
\[ 0=E_{1,2}^4=m_{1,1}^3m_{2,2}-m_{1,1}m_{2,2}^2, \] 
from which it follows that
$m_{2,2}=m_{1,1}^2$ and therefore $m_{i,i}=m_{1,1}^i$, for $1\le i\le 13$.

For the second part we consider, in the given order, the following equations:
\[ E_{0,1}^3=0,\quad E_{0,2}^4=0,\quad E_{0,3}^5=0, \]
from which we get that
\begin{eqnarray*}
 m_{4,3} &=& m_{1,1}m_{3,2} \\
 m_{5,4} &=& m_{1,1}^3m_{2,1}+m_{1,1}^2m_{3,2} \\
 m_{6,5} &=& 2 m_{1,1}^4 m_{2,1} +m_{1,1}^3 m_{3,2}
\end{eqnarray*}
Now, from the identity 
\[ 0=E_{1,2}^5 = 2 m_{1,1}^4 m_{2,1}, \] 
we obtain that $m_{2,1} = 0$.

We continue by considering the equations
\[ E_{0,1}^4 = 0,\quad E_{0,2}^5 = 0,\quad E_{0,3}^6=0, \quad E_{0,4}^7=0, \quad E_{0,5}^8=0, \]
from where we get that
\begin{eqnarray*}
 m_{5,3} &=& -m_{1,1}^2m_{3,1}+m_{1,1}m_{4,2} \\
 m_{6,3} &=& -m_{1,1}^3m_{3,1}+m_{1,1}^2m_{4,2} \\
 m_{7,5} &=& -\frac{9}{10}m_{1,1}^4m_{3,1}+m_{1,1}^3m_{4,2} \\
 m_{8,6} &=& - \frac{4}{5} m_{1,1}^5m_{3,1}+m_{1,1}^4m_{4,2} \\
 m_{9,7} &=& -\frac{5}{7}m_{1,1}^6m_{3,1}+m_{1,1}^5m_{4,2}
\end{eqnarray*}
Now, from the identities
\begin{eqnarray*}
 0 &=& E_{1,2}^6 = \frac{1}{10}m_{1,1}(2m_{1,1}^2m_{4,2}-m_{3,2}^2), \\
 0 &=& E_{1,4}^8 = -\frac{1}{35}m_{1,1}^3(-6m_{1,1}^2m_{4,2}+35m_{1,1}^6-35m_{1,1}^4+3m_{3,2}^2),
\end{eqnarray*}
it follows that
\begin{eqnarray*}
 E1 &=& 2m_{1,1}^2m_{4,2}-m_{3,2}^2=0, \\
 E2 &=& -6m_{1,1}^2m_{4,2}+35m_{1,1}^6-35m_{1,1}^4+3m_{3,2}^2=0.
\end{eqnarray*}
Finally, 
\[ 3 E1 + E2 = 35m_{1,1}^4(m_{1,1}-1)(m_{1,1}+1) \]
and therefore $m_{1,1}=1$ or $m_{1,1}=-1$.
\end{proof}

We continue with another technical result that will simplify all the arguments afterwards.

\begin{lemma}\label{lemma:m32}
 The entries $m_{i+1,i}$ for $2\le i\le 12$, of the second diagonal of the matrix $[g]$, are all equal to $m_ {3,2}$ if $m_{1,1}=1$,
 or $m_{i+1,i}=(-1)^i m_{3,2}$ if $m_{1,1}=-1$. 
\end{lemma}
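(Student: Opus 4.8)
The plan is to reduce the whole statement to a single first-order recursion for the entries of the second subdiagonal of $[g]$ and then iterate it. Concretely, I would prove the relation
\[ m_{i+1,i}=m_{1,1}\,m_{i,i-1},\qquad 3\le i\le 12. \]
Granting this, and using $m_{1,1}^{2}=1$ from Lemma~\ref{lemma:mii}, an immediate induction whose base case is the tautology $m_{3,2}=m_{3,2}$ gives $m_{i+1,i}=m_{1,1}^{\,i-2}\,m_{3,2}$ for all $2\le i\le 12$; this equals $m_{3,2}$ when $m_{1,1}=1$ and $(-1)^{i-2}m_{3,2}=(-1)^{i}m_{3,2}$ when $m_{1,1}=-1$, which is exactly the assertion. (As a sanity check, at $i=3$ it recovers $m_{4,3}=m_{1,1}m_{3,2}$, already found inside the proof of Lemma~\ref{lemma:mii}.)

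To establish the recursion, for each $i$ in the range I would evaluate the single identity $E_{0,i-2}^{i}=0$. On the one hand $[e_0,e_{i-2}]_t=[e_0,e_{i-2}]=e_{i-1}$, so the term $g[e_0,e_{i-2}]_t=ge_{i-1}$ contributes its $e_i$-coefficient, which is precisely $m_{i+1,i}$. On the other hand, the structural fact I would use is that the defining brackets of $\f$ satisfy $[e_p,e_q]\in\langle e_{p+q+1},\dots,e_{12}\rangle$ for all $0\le p<q\le 12$, which one reads off at once from the multiplication table. By Lemma~\ref{lemma:mii} the matrix $[g]$ is lower triangular with $m_{2,1}=0$, so $ge_0\in\langle e_0,e_2,e_3,\dots,e_{12}\rangle$ has $e_0$-coefficient $m_{1,1}$, while $ge_{i-2}\in\langle e_{i-2},e_{i-1},\dots,e_{12}\rangle$ has $e_{i-1}$-coefficient $m_{i,i-1}$. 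Expanding $[ge_0,ge_{i-2}]$ bilinearly, any summand $[e_p,e_q]$ with $p\ge 2$ (from $ge_0$) and $q\ge i-2$ (from $ge_{i-2}$) lies in $\langle e_{p+q+1},\dots,e_{12}\rangle\subseteq\langle e_{i+1},\dots,e_{12}\rangle$ because $p+q\ge i$, and so has vanishing $e_i$-coefficient; moreover $ge_{i-2}$ has no $e_0$-component for $i\ge 3$, so the $e_i$-coefficient of $[ge_0,ge_{i-2}]$ is that of $m_{1,1}[e_0,ge_{i-2}]$, namely $m_{1,1}$ times the $e_{i-1}$-coefficient of $ge_{i-2}$, i.e.\ $m_{1,1}m_{i,i-1}$. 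Hence $0=E_{0,i-2}^{i}=m_{i+1,i}-m_{1,1}m_{i,i-1}$, which is the desired recursion.

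I do not expect a genuine obstacle here: the argument is a degree count for which structure constants of $\f$ can reach the diagonal entry $e_i$ through the lower-triangular map $g$. The only points needing attention are reading off the filtration property $[e_p,e_q]\in\langle e_{p+q+1},\dots,e_{12}\rangle$ from the table, keeping track of the admissible range of $i$ (one needs $i-2\ge 1$ so that $E_{0,i-2}^{i}$ makes sense and $[e_0,e_{i-2}]_t=e_{i-1}$, and $i\le 12$ so that $ge_{i-1}$ has an $e_i$-component), and noting that no reverse cross term $[e_q,e_0]$ appears since $ge_{i-2}$ has no $e_0$-component. Once the recursion is available, the induction closing the proof is routine.
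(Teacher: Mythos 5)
Your proposal is correct and uses essentially the same argument as the paper: the equations $E_{0,i-2}^{i}=0$ for $3\le i\le 12$ are exactly the paper's family $E_{0,j}^{j+2}=0$, $1\le j\le 10$, which yield the subdiagonal recursion $m_{i+1,i}=m_{1,1}m_{i,i-1}$ (the paper just writes the two cases $m_{1,1}=\pm1$ separately instead of keeping $m_{1,1}$ symbolic). Your extra justification via the filtration property $[e_p,e_q]\in\langle e_{p+q+1},\dots,e_{12}\rangle$ and the use of $m_{2,1}=0$ from Lemma~\ref{lemma:mii} is accurate and matches what the paper's coefficient computation implicitly relies on.
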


\begin{proof}
On the one hand, if $m_{i,i}=1$ for all $i$,
we then have that for $1\le i\le 10$
 \[ 0=E_{0,i}^{i+2}=m_{i+3,i+2}-m_{i+2,i+1}, \]
from which it follows that $m_{i+1,i}=m_{3,2}$, for $2\le i\le 12$.

On the other hand, if $m_{i,i}=(-1)^i$,
we then have that for $1\le i\le 10$
 \[ 0=E_{0,i}^{i+2}=m_{i+3,i+2}+(-1)^{i+1}m_{i+2,i+1}, \]
from which it follows that $m_{i+1,i}=(-1)^im_{3,2}$, for $2\le i\le 12$.
\end{proof}

These lemmas all together yield two possible forms for the matrix $[g]$, depending on whether $m[1,1]=1$ or $m[1,1]=-1$:
\begin{itemize}
 \item 
If $m_{1,1}=1$, then
{ \[ [g]= \begin{pmatrix}
  1 & 0 & 0 & 0 & 0   \\
  0 &  1 &  0  & 0 & 0  \\
 m_{3,1} & m_{3,2} &  1  & 0 & 0 & \dots \\
 m_{4,1} & m_{4,2} & m_{3,2} &  1 & 0  \\
 m_{5,1} & m_{5,2} & m_{5,3} & m_{3,2} &  1  \\
  & & \vdots & & \ddots & \ddots \\
 \end{pmatrix} 
\]}

\item
If $m_{1,1}=-1$, then
{ \[ [g]= \begin{pmatrix}
  -1 & 0 & 0 & 0 & 0   \\
  0 &  1 &  0  & 0 & 0  \\
 m_{3,1} & m_{3,2} &  -1  & 0 & 0 & \dots \\
 m_{4,1} & m_{4,2} & -m_{3,2} &  1 & 0  \\
 m_{5,1} & m_{5,2} & m_{5,3} & m_{3,2} &  -1  \\
  & & \vdots & & \ddots & \ddots \\
 \end{pmatrix} 
\]}
\end{itemize}

We are ready to prove the main result of this section.

\begin{theorem}
The filiform Lie algebra $\f_t$ is isomorphic to $\f$ if and only if $t=0$.
\end{theorem}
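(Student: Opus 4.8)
The plan is to establish only the nontrivial implication, since at $t=0$ the bracket $[\ ,\ ]_t$ agrees with that of $\f$, so $\f_0=\f$ and the converse is immediate. Assume then that $g\colon\f_t\to\f$ is an isomorphism with matrix $[g]$ as in \eqref{eqn:g}. By Lemmas~\ref{lemma:mii} and~\ref{lemma:m32}, $[g]$ is lower triangular, its diagonal is $(m_{1,1}^i)_i$ with $m_{1,1}=\pm1$, and its second diagonal is controlled by the single entry $m_{3,2}$; thus $[g]$ has one of the two explicit shapes displayed above, and the only unknowns left are $t$ together with the entries $m_{i,j}$ lying strictly below the second diagonal. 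The decisive structural observation is that the deformation changes only the brackets $[e_1,e_j]_t=[e_1,e_j]+t\,e_{j+7}$ for $j=2,3,4,5$. Consequently $t$ appears in an isomorphism equation $E_{i,j}^k=0$ only when $i=1$, $j\in\{2,3,4,5\}$ and $k\ge j+7$, and in each of these it appears linearly; every other equation $E_{i,j}^k=0$ is $t$-free and is formally an equation satisfied by an automorphism of $\f$.

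The computational core is a finite elimination carried out exactly in the style of the proof of Lemma~\ref{lemma:h}. First I would exploit the $t$-free equations: using $E_{0,j}^{k}=0$ and the equations $E_{i,j}^k=0$ with $i\ge2$, one solves in turn, and in a convenient order, for the entries of the successive subdiagonals of $[g]$, each obtained as an explicit affine combination of $m_{3,2}$ and of finitely many entries already treated. Substituting all of these into the $t$-dependent equations, for instance $E_{1,2}^{9},\ E_{1,2}^{10},\ E_{1,3}^{10},\ E_{1,3}^{11},\ E_{1,4}^{11},\ E_{1,4}^{12}$ and $E_{1,5}^{12}$, turns each of them into a relation of the form $\pm t+(\text{affine in the }m_{i,j})=0$. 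Finally, a suitable $\Q$-linear combination of these relations, together if necessary with finitely many $t$-free ones, makes all the $m_{i,j}$ cancel and leaves an identity $c\,t=0$ with $c\in\Q\setminus\{0\}$ an explicit constant. Therefore $t=0$.

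Two points require attention. First, the elimination must be performed separately for $m_{1,1}=1$ and $m_{1,1}=-1$; in the second case the off-diagonal entries acquire the alternating signs recorded in the displayed form of $[g]$, which merely modifies the rational coefficients encountered, and the argument again ends with $c\,t=0$ for some $c\ne0$ (it is even possible that this case turns out to be vacuous). Second, and this is the only genuine obstacle, the bookkeeping is considerable: one is pushing a long chain of substitutions through a $13\times13$ triangular matrix against the rather unpleasant structure constants of $\f_{13}$, so the whole difficulty lies in organizing these substitutions and in exhibiting the final cancelling combination correctly. There is no conceptual obstruction, and in particular nothing about $\f_t$ is used beyond its being isomorphic to $\f$.
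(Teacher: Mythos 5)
Your outline follows the paper's strategy exactly (reduce $[g]$ via Lemmas~\ref{lemma:mii} and~\ref{lemma:m32}, then eliminate the remaining entries through the identities $E_{i,j}^k=0$ until a relation forcing $t=0$ appears), but as written it has a genuine gap: the decisive step is asserted, not proved. The statement that after the substitutions ``a suitable $\Q$-linear combination of these relations \dots makes all the $m_{i,j}$ cancel and leaves an identity $c\,t=0$ with $c\in\Q\setminus\{0\}$'' is precisely the content of the theorem, and nothing in your argument guarantees it. A priori the elimination could terminate in $0=0$ for every $t$, i.e.\ the linear deformation could be trivial (this does happen for deformations built from derivations in other situations), and the only way to exclude this is to actually exhibit the chain of solved entries and the final identity. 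The paper does exactly this: it solves $m_{5,3},m_{6,4},m_{7,5},m_{4,2},\dots,m_{7,2},\dots,m_{11,6},m_{12,7}$ in a specific order (the entries $m_{7,2}$, $m_{11,6}$, $m_{12,7}$ acquire explicit $-\tfrac{7}{3}t$ contributions) and then finds that the single coefficient $E_{2,3}^{11}$ collapses to $-\tfrac{5}{36}\,t$, whence $t=0$. Without producing this (or an equivalent) explicit computation, you have only described a plausible algorithm, together with an unverified claim about its output; you yourself concede that ``the whole difficulty lies in \dots exhibiting the final cancelling combination correctly,'' which is an admission that the proof is not there.

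Two smaller inaccuracies are worth flagging. First, the $t$-dependent identities are not all of the form ``$\pm t+(\text{affine in the }m_{i,j})$'': in $E_{1,j}^{k}$ with $k>j+7$ the parameter $t$ multiplies the unknown entries $m_{k+1,j+8}$ of $g$, so bilinear terms $t\,m_{i,j}$ occur, and the bookkeeping must accommodate them. Second, your structural remark that $t$ can only enter through equations with $i=1$, $j\in\{2,3,4,5\}$ is true of the raw equations but misleading about how the contradiction is reached: in the paper the final equation is the formally $t$-free coefficient $E_{2,3}^{11}$, which becomes a nonzero multiple of $t$ only after the previously solved, $t$-laden expressions for $m_{7,2}$, $m_{11,6}$, $m_{12,7}$ are substituted. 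So the separation you propose between ``$t$-free'' and ``$t$-dependent'' equations cannot be maintained through the elimination, and the order of substitutions matters; this is an organizational point, but it is exactly where an unexecuted plan can go wrong.
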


\begin{proof}
The proof follows by repeatedly considering particular sets of identities
$\{E_{i,j}^k=0\}$ and solving some of the entries $\{m_{i,j}\}$
until one gets that $t=0$.
From now on let $\alpha=m_{1,1}$ and recall that $\alpha=1$ or $\alpha=-1$.

\begin{itemize}
 
 \item By considering the equations
\[ E_{0,1}^4=0,\quad E_{0,2}^5=0,\quad E_{0,3}^6=0, \]
we get that
\begin{eqnarray*}
 m_{5,3} &=& -m_{3,1}+\alpha m_{4,2}, \\
 m_{6,4} &=& -\alpha m_{3,1}+m_{4,2}, \\
 m_{7,5} &=& -\tfrac{9}{10} m_{3,1}+\alpha m_{4,2}. \\
 \end{eqnarray*} 
And from  
\[ 0 = E_{1,2}^6= \alpha \big( -\tfrac{1}{10}m_{3,2}^2 + \tfrac{1}{5}m_{4,2} \big), \]
it follows that
\[ m_{4,2}=\tfrac{1}{2}m_{3,2}^2. \]

 \item By considering the equations
\[ E_{0,1}^5=0,\quad E_{0,2}^6=0,\quad E_{0,3}^7=0,\]
we get that
\begin{eqnarray*}
 m_{6,3} &=& \alpha m_{5,2}-m_{4,1} \\
 m_{7,4} &=& \alpha \big(\tfrac{1}{10} m_{3,1}m_{3,2}-\tfrac{11}{10} m_{4,1}\big)+m_{5,2} \\
 m_{8,5} &=& \alpha m_{5,2}-\tfrac{11}{10} m_{4,1}+\tfrac{1}{5} m_{3,1}m_{3,2} \\
 \end{eqnarray*}
And from $0=E_{1,2}^7$, it follows that
\[ m_{5,2}=\alpha \big(m_{4,1}-m_{3,1}m_{3,2}\big)+\tfrac{1}{6}m_{3,2}^3. \]

 \item By considering the equations
\[ E_{0,1}^6=0,\quad E_{0,2}^7=0,\quad E_{0,3}^8=0, \]
we get that
\begin{eqnarray*}
 m_{7,3} &=& \alpha m_{6,2}-\tfrac{9}{10}m_{5,1}-\tfrac{1}{10}m_{4,1}m_{3,2}+\tfrac{1}{20}m_{3,1}m_{3,2}^2 \\
 m_{8,4} &=& \alpha \big( -m_{5,1}-\tfrac{1}{10}m_{4,1}m_{3,2}+\tfrac{1}{10}m_{3,1}m_{3,2}^2\big)+m_{6,2}-\tfrac{1}{10}m_{3,1}^2  \\
 m_{9,5} &=& \alpha \big(m_{6,2}-\tfrac{13}{70}m_{3,1}^2\big)-\tfrac{71}{70}m_{5,1}+m_{3,1}-\tfrac{3}{35}m_{4,1}m_{3,2}+\tfrac{1}{7}m_{3,1}m_{3,2}^2 \\
 \end{eqnarray*}
And from $0=E_ {1,2}^8$, it follows that
it follows that
\[ m_{6,2}=\alpha \big(m_{5,1}-\tfrac{1}{2}m_{3,1}m_{3,2}^2\big)+\tfrac{1}{2}m_{3,1}^2+\tfrac{1}{24}m_{3,2}^4. \]

 \item By considering the equations
\[ E_{0,1}^7=0,\quad E_{0,2}^8=0,\quad E_{0,3}^9=0, \]
we get that
\begin{eqnarray*}
 m_{8,3} &=& \alpha \big(m_{7,2}+\tfrac{1}{10}m_{3,1}m_{4,1}-\tfrac{1}{10}m_{3,1}^2m_{3,2}\big)
               -\tfrac{1}{10}m_{5,1}m_{3,2} \\ 
         &&    +\tfrac{1}{60}m_{3,1}m_{3,2}^3-\tfrac{4}{5}m_{6,1} \\
 m_{9,4} &=& \alpha \big(-m_{4,1}-\tfrac{31}{35}m_{6,1}+m_{3,1}m_{3,2}+\tfrac{1}{140}m_{4,1}m_{3,2}^2+\tfrac{13}{420}m_{3,1}m_{3,2}^3 \\
         &&    -\tfrac{4}{35}m_{5,1}m_{3,2} \big) +m_{7,2}+\tfrac{3}{35}m_{3,1}m_{4,1}-\tfrac{13}{70}m_{3,1}^2m_{3,2} \\
 m_{10,5} &=& \alpha \big(m_{7,2}+\tfrac{9}{140}m_{3,1}m_{4,1}-\tfrac{1}{4}m_{3,1}^2m_{3,2}\big)-m_{4,1}-\tfrac{9}{10}m_{6,1} \\ 
          &&  +2m_{3,1}m_{3,2}+\tfrac{1}{70}m_{4,1}m_{3,2}^2+\tfrac{3}{70}m_{3,1}m_{3,2}^3-\tfrac{4}{35}m_{5,1}m_{3,2} \\
 \end{eqnarray*}
And from $0=E_{1,2}^9$, it follows that
\begin{eqnarray*}
 m_{7,2} &=& \alpha \big(-\tfrac{7}{3}t+\tfrac{9}{10}m_{6,1}+\tfrac{1}{10}m_{5,1}m_{3,2}-\tfrac{1}{20}m_{4,1}m_{3,2}^2
           -\tfrac{3}{20}m_{3,1}m_{3,2}^3\big) \\
          && +\tfrac{1}{2}m_{3,1}^2m_{3,2}+\tfrac{1}{120}m_{3,2}^5
\end{eqnarray*}

 \item Finally, by considering the equations
\[ E_{0,4}^7=0,\ E_{0,4}^8=0,\ E_{0,5}^9=0,\ E_{0,6}^{10}=0,\ E_{0,7}^{11}=0,\ E_{0,4}^{10}=0,\ E_{0,5}^{11}=0 \]
we get that
\begin{eqnarray*}
m_{8,6} &=& -\tfrac{4}{5}\alpha m_{3,1}+\tfrac{1}{2}m_{3,2}^2\\
m_{9,6} &=& \alpha \big(-\tfrac{3}{35}m_{4,1}-\tfrac{5}{7}m_{3,1}m_{3,2}\big)+\tfrac{1}{6}m_{3,2}^3 \\
m_{10,7} &=& \tfrac{1}{6}\alpha m_{3,2}^3-\tfrac{1}{14}m_{4,1}-\tfrac{9}{14}m_{3,1}m_{3,2} \\
m_{11,8} &=& \alpha \big(-\tfrac{5}{84}m_{4,1}-\tfrac{7}{12}m_{3,1}m_{3,2}\big)+\tfrac{1}{6}m_{3,2}^3 \\
m_{12,9} &=& \tfrac{1}{6}\alpha m_{3,2}^3-\tfrac{1}{20}m_{4,1}-\tfrac{8}{15}m_{3,1}m_{3,2} \\
m_{11,6} &=& \alpha \big(-\tfrac{7}{3}t-\tfrac{83}{126}m_{4,1}-\tfrac{1}{420}m_{6,1}+\tfrac{335}{126}m_{3,1}m_{3,2}-\tfrac{1}{84}m_{5,1}m_{3,2}\\
 && -\tfrac{5}{168}m_{4,1}m_{3,2}^2-\tfrac{7}{72}m_{3,1}m_{3,2}^3\big)+\tfrac{1}{120}m_{3,2}^5+\tfrac{17}{84}m_{3,1}^2m_{3,2}+\tfrac{1}{21}m_{3,1}m_{4,1} \\
m_{12,7} &=& \alpha \big(\tfrac{1}{28}m_{3,1}m_{4,1}+\tfrac{1}{6}m_{3,1}^2m_{3,2}+\tfrac{1}{120}m_{3,2}^5\big) -\tfrac{7}{3}t -\tfrac{20}{63}m_{4,1} \\
         && -\tfrac{1}{420}m_{6,1}+\tfrac{125}{42}m_{3,1}m_{3,2} -\tfrac{1}{105}m_{5,1}m_{3,2}-\tfrac{1}{40}m_{4,1}m_{3,2}^2-\tfrac{4}{45}m_{3,1}m_{3,2}^3 \\
\end{eqnarray*}
And from $0=E_{2,3}^{11}=-\tfrac{5}{36}t$, it follows that $t=0$.
\end{itemize}
\end{proof}


\end{document}